\definecolor{labelkey}{rgb}{0,0.08,0.45}
\definecolor{rekey}{rgb}{0,0.6,0.0}
\definecolor{Brown}{rgb}{0.45,0.0,0.05}
\newcommand{\scal}[2]{\langle{{#1},{#2}}\rangle}
\newcommand{\RR}{\ensuremath{\mathbb R}}
\newcommand{\RX}{\ensuremath{\,\left]-\infty,+\infty\right]}}
\newcommand{\RXX}{\ensuremath{\,\left[-\infty,+\infty\right]}}
\newcommand{\NN}{\ensuremath{\mathbb N}}
\newcommand{\menge}[2]{\big\{{#1} \mid {#2}\big\}}
\newcommand{\To}{\ensuremath{\rightrightarrows}}
\newcommand{\dom}{\ensuremath{\operatorname{dom}}}
\newcommand{\gra}{\ensuremath{\operatorname{gra}}}
\newcommand{\inte}{\ensuremath{\operatorname{int}}}
\newcommand{\ran}{\ensuremath{\operatorname{ran}}}
\renewcommand{\phi}{\ensuremath{\varphi}}
\newtheorem{theorem}{Theorem}[section]
\newtheorem{lemma}[theorem]{Lemma}
\newtheorem{fact}[theorem]{Fact}
\newtheorem{definition}[theorem]{Definition}
\theoremstyle{plain}{\theorembodyfont{\rmfamily}
}
\theoremstyle{plain}{\theorembodyfont{\rmfamily}
}
\theoremstyle{plain}{\theorembodyfont{\rmfamily}
}
\theoremstyle{plain}{\theorembodyfont{\rmfamily}
\newtheorem{example}[theorem]{Example}}
\theoremstyle{plain}{\theorembodyfont{\rmfamily}
\newtheorem{remark}[theorem]{Remark}}
\theoremstyle{plain}{\theorembodyfont{\rmfamily}
}
\begin{document}


\title{\sffamily{For maximally monotone linear relations,
dense type, negative-infimum type, and Fitzpatrick-Phelps type
all coincide with monotonicity of the adjoint}}

\author{
Heinz H.\ Bauschke\thanks{Mathematics, Irving K.\ Barber School,
UBC Okanagan, Kelowna, British Columbia V1V 1V7, Canada. E-mail:
\texttt{heinz.bauschke@ubc.ca}.},\;
Jonathan M. Borwein\thanks{CARMA, University of Newcastle, Newcastle, New South Wales 2308, Australia. E-mail:
\texttt{jonathan.borwein@newcastle.edu.au}.},\;
 Xianfu
Wang\thanks{Mathematics, Irving K.\ Barber School, UBC Okanagan,
Kelowna, British Columbia V1V 1V7, Canada. E-mail:
\texttt{shawn.wang@ubc.ca}.},\; and Liangjin\
Yao\thanks{Mathematics, Irving K.\ Barber School, UBC Okanagan,
Kelowna, British Columbia V1V 1V7, Canada.
E-mail:  \texttt{ljinyao@interchange.ubc.ca}.}}

\date{March 30, 2011}
\maketitle

\begin{abstract} \noindent
It is shown that, for maximally monotone linear relations defined on
a general Banach space, the monotonicities of dense type, of
negative-infimum type, and of Fitzpatrick-Phelps type are the same
and equivalent to monotonicity of the adjoint. This result also
provides affirmative answers to two problems: one posed by Phelps
and Simons, and the other by  Simons.
\end{abstract}

\noindent {\bfseries 2010 Mathematics Subject Classification:}\\
{Primary  47A06, 47H05;
Secondary
47B65, 47N10,
 90C25}

\noindent {\bfseries Keywords:}
Adjoint,
linear relation,
Fenchel conjugate,
maximally monotone operator,
monotone operator,
  operators of type (D),
 operators of type (FP),
operators of type (NI),
set-valued operator.

\section{Introduction}

Throughout this paper, we assume that
$X$ is a real Banach space with norm $\|\cdot\|$,
that $X^*$ is the continuous dual of $X$, and
that $X$ and $X^*$ are paired by $\scal{\cdot}{\cdot}$.
Let $A\colon X\To X^*$
be a \emph{set-valued operator} (also known as multifunction)
from $X$ to $X^*$, i.e., for every $x\in X$, $Ax\subseteq X^*$,
and let
$\gra A = \menge{(x,x^*)\in X\times X^*}{x^*\in Ax}$ be
the \emph{graph} of $A$. The \emph{domain} of $A$, written as
 $\dom A$,  is $\dom A= \menge{x\in X}{Ax\neq\varnothing}$ and
$\ran A=A(X)$ for the \emph{range} of $A$.
Recall that $A$ is  \emph{monotone} if
\begin{equation}
\scal{x-y}{x^*-y^*}\geq 0,\quad \forall (x,x^*)\in \gra A\;\forall (y,y^*)\in\gra A,
\end{equation}
and \emph{maximally monotone} if $A$ is monotone and $A$ has
 no proper monotone extension
(in the sense of graph inclusion).
Let $A:X\rightrightarrows X^*$ be monotone and $(x,x^*)\in X\times X^*$.
 We say $(x,x^*)$ is \emph{monotonically related to}
$\gra A$ if
\begin{align*}
\langle x-y,x^*-y^*\rangle\geq0,\quad \forall (y,y^*)\in\gra A.\end{align*}
 We now define the three aforementioned types of maximally monotone operators.

 \begin{definition}
 Let $A:X\To X^*$ be maximally monotone.
 Then three key types of monotone operators are defined as follows.
 \begin{enumerate}
 \item $A$ is
\emph{of dense type or type (D)}
(see \cite{Gossez3}) if for every $(x^{**},x^*)\in X^{**}\times X^*$ with
\begin{align*}
\inf_{(a,a^*)\in\gra A}\langle a-x^{**}, a^*-x^*\rangle\geq 0,
\end{align*}
there exist a  bounded net
$(a_{\alpha}, a^*_{\alpha})_{\alpha\in\Gamma}$ in $\gra A$
such that
$(a_{\alpha}, a^*_{\alpha})$ weak*$\times$strong converges to
$(x^{**},x^*)$.
\item $A$ is
\emph{of type negative infimum (NI)} (see \cite{SiNI}) if
\begin{align*}
\sup_{(a,a^*)\in\gra A}\big(\langle a,x^*\rangle+\langle a^*,x^{**}\rangle
-\langle a,a^*\rangle\big)
\geq\langle x^{**},x^*\rangle,
\quad \forall(x^{**},x^*)\in X^{**}\times X^*.
\end{align*}
\item
$A$ is
\emph{of type Fitzpatrick-Phelps (FP)}
(see \cite{FP})  if  for every open convex subset
$U$ of $X^*$ such that
$U\cap \ran A\neq\varnothing$, the implication
\begin{equation*}
x^*\in U\,\text{and}\,(x,x^*)\in X\times X^*\,\text{is monotonically related to $\gra A\cap (X\times U)$}
\Rightarrow (x,x^*)\in\gra A
\end{equation*}
holds.
\end{enumerate}
\end{definition}

We say $A$ is a \emph{linear relation} if $\gra A$ is a linear
subspace. By saying $A:X\To X^*$ is \emph{at most single-valued}, we
mean that for every $x\in X$, $Ax$ is either a singleton or empty.
In this case, we follow a slight but common abuse of notation and
write $A \colon \dom A\to X^*$. Conversely, if $T\colon D\to X^*$,
we may identify $T$ with $A:X\To X^*$, where $A$ is at most
single-valued with $\dom A = D$.

Monotone operators have proven to be a key class of objects in both
modern Optimization and Analysis; see, e.g., \cite{Bor1,Bor2,Bor3},
the books \cite{BC2011, BorVan,BurIus,ph,Si,Si2,RockWets,Zalinescu}
and the references therein.

 In this paper,
 we provide tools to give affirmative answers to two questions respectively
 posed by Phelps and Simons, and by Simons.
 Phelps and Simons posed the following question in
 \cite[Section~9, item~2]{PheSim}:
\emph{Let $A:\dom A\to X^*$ be linear and maximally monotone.
 Assume that $A^*$ is monotone. Is $A$ necessarily
 of type (D)}?
 
 Simons  posed another question in  \cite[Problem~47.6]{Si2}:
 \emph{Let $A:\dom A\rightarrow X^*$ be linear and maximally monotone.
 Assume that $A$ is of type (FP).  Is $A$ necessarily
 of type (NI)?}

We give affirmative answers to the above questions in
Theorem~\ref{TypeD:1}.
 Moreover,  we generalize the results  to the linear relations.
 Linear relations have recently become a center of attention
  in Monotone Operator Theory; see, e.g.,
\cite{BB,BBW,BWY2,BWY3,BWY4, BWY7, BWY8,BWY9,BuSv1,BuSv2, PheSim,
 Si3,Svaiter,Voisei06b,Voisei06,VZ,WY,Yao,Yao2}
and Cross' book \cite{Cross} for general background on linear
relations.

We adopt standard notation used in these books:
Given a subset $C$ of $X$,
$\inte C$ is the \emph{interior} of $C$,
and
$\overline{C}$ is   the \emph{norm closure} of $C$ .
The \emph{indicator function} of $C$, written as $\iota_C$, is defined
at $x\in X$ by
\begin{align}
\iota_C (x)=\begin{cases}0,\,&\text{if $x\in C$;}\\
+\infty,\,&\text{otherwise}.\end{cases}\end{align} For every $x\in
X$, the \emph{normal cone} operator of $C$ at $x$ is defined by
$N_C(x)= \menge{x^*\in X^*}{\sup_{c\in C}\scal{c-x}{x^*}\leq 0}$, if
$x\in C$; and $N_C(x)=\varnothing$, if $x\notin C$. For $x,y\in X$,
we set $\left[x,y\right]=\{tx+(1-t)y\mid 0\leq t\leq 1\}$. If $Z$ is
a real  Banach space with continuous dual $Z^*$ and a subset $S$ of
$Z$, we denote $S^\bot$ by $S^\bot = \menge{z^*\in Z^*}{\langle
z^*,s\rangle= 0,\quad \forall s\in S}$. Given a subset $D$ of $Z^*$,
we set $D_{\bot} = D^\perp \cap Z$. The \emph{adjoint} of $A$,
written as $A^*$, is defined by
\begin{equation*}
\gra A^* =
\menge{(x^{**},x^*)\in X^{**}\times X^*}{(x^*,-x^{**})\in(\gra A)^{\bot}}.
\end{equation*}
Let $f\colon X\to \RX$. Then
$\dom f= f^{-1}(\RR)$ is the \emph{domain} of $f$, and
$f^*\colon X^*\to\RXX\colon x^*\mapsto
\sup_{x\in X}(\scal{x}{x^*}-f(x))$ is
the \emph{Fenchel conjugate} of $f$.
For $\varepsilon \geq 0$,
the \emph{$\varepsilon$--subdifferential} of $f$ is defined by
   $\partial_{\varepsilon} f\colon X\To X^*\colon
   x\mapsto \menge{x^*\in X^*}{(\forall y\in
X)\; \scal{y-x}{x^*} + f(x)\leq f(y)+\varepsilon}$.
We also set $\partial f = \partial_{0}f$.

Let $F:X\times X^*\rightarrow\RX$. We say $F$ is  a \emph{representative}
 of a maximally monotone operator
$A:X\To X^*$ if $F$ is lower semicontinuous and convex with $F
 \geq\langle\cdot,\cdot\rangle$ on $X\times X^*$ and
\begin{align*}
\gra A=\{(x,x^*)\in X\times X^*\mid F(x,x^*)=\langle x,x^*\rangle\}.
\end{align*}
Let $(z,z^*)\in X\times X^*$. Then $F_{(z,z^*)}:X\times X^*\rightarrow\RX$
 \cite{MLegSva,Si2,MarSva2} is defined by
\begin{align}
F_{(z,z^*)}(x,x^*)&=F(z+x,z^*+x^*)-\big(\langle x,z^*\rangle+
\langle z,x^*\rangle+\langle z,z^*\rangle\big)\notag\\
&=F(z+x,z^*+x^*)-\langle z+x,z^*+x^*\rangle+\langle x,x^*\rangle,
\quad \forall(x,x^*)\in X\times X^*.\label{e:timmy}
\end{align}
Moreover, the \emph{closed unit ball} in $X$ is denoted by $B_X=
\menge{x\in X}{\|x\|\leq1}$, and $\NN=\{1,2,3,\ldots\}$. We identify
$X$ with its canonical image in the bidual space $X^{**}$.
Furthermore, $X\times X^*$ and $(X\times X^*)^* = X^*\times X^{**}$
are likewise paired via $$\scal{(x,x^*)}{(y^*,y^{**})} =
\scal{x}{y^*} + \scal{x^*}{y^{**}},$$ where $(x,x^*)\in X\times X^*$
and $(y^*,y^{**}) \in X^*\times X^{**}$. The norm on $X\times X^*$,
written as $\|\cdot\|_1$, is defined by
 $\|(x,x^*)\|_1=\|x\|+\|x^*\|$ for every $(x,x^*)\in X\times X^*$.

The remainder of this paper is organized as follows. In
Section~\ref{s:aux}, we collect auxiliary results for future
reference and for the reader's convenience. The main result
(Theorem~\ref{TypeD:1}) is provided in Section~\ref{s:main}. The
affirmative answers to Phelps-Simons' and Simons' questions are then
apparent.

 \section{Auxiliary results}\label{s:aux}

 \begin{fact}[Rockafellar] \label{f:F4}
\emph{(See \cite[Theorem~3(a)]{Rock66}, \cite[Corollary~10.3]{Si2}
or
{\cite[Theorem~2.8.7(iii)]{Zalinescu}}.)}
Let $f,g:  X\rightarrow\RX$ be proper convex functions.
Assume that there exists a point $x_0\in\dom f \cap \dom g$
such that $g$ is continuous at $x_0$. For every $x^*\in X^*$,
we have
 \begin{align*}(f+g)^*(x^*)=\displaystyle\min_{y^*\in X^*}\left[f^*(y^*)+g^*(x^*-y^*)\right].
 \end{align*}
\end{fact}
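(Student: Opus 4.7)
The plan is to establish the equality by two inequalities. The inequality $(f+g)^{*}(x^{*}) \leq \inf_{y^{*}\in X^{*}}\big[f^{*}(y^{*}) + g^{*}(x^{*}-y^{*})\big]$ is a direct consequence of the Young-Fenchel inequality and requires no qualification hypothesis. The reverse inequality, together with attainment of the infimum (so that the $\inf$ becomes a $\min$), is the substantive content and will be proved by a geometric Hahn-Banach (sandwich) argument that exploits the continuity of $g$ at $x_{0}$.

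For the easy direction I would fix $y^{*}\in X^{*}$, split $\langle x,x^{*}\rangle - f(x) - g(x) = \big(\langle x,y^{*}\rangle - f(x)\big) + \big(\langle x,x^{*}-y^{*}\rangle - g(x)\big)$, and take the supremum over $x\in X$ to obtain $(f+g)^{*}(x^{*}) \leq f^{*}(y^{*}) + g^{*}(x^{*}-y^{*})$. This also disposes of the degenerate case $(f+g)^{*}(x^{*}) = +\infty$ automatically, so from here on I may assume $\alpha := (f+g)^{*}(x^{*})$ is a real number.

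For the hard direction, the definition of $\alpha$ rearranges to $f(x) - \langle x,x^{*}\rangle + \alpha \geq -g(x)$ for every $x \in X$; that is, a proper convex function pointwise dominates a proper concave function. The hypothesis that $g$ is continuous at $x_{0}\in\dom f\cap\dom g$ makes $-g$ real-valued and upper semicontinuous on a neighborhood of $x_{0}$, so the geometric Hahn-Banach theorem (equivalently, the convex-concave sandwich theorem) applies and produces a continuous affine functional $x\mapsto\langle x,y^{*}\rangle + c$ sitting between the two functions. From the two sandwich inequalities I read off $f^{*}(x^{*}+y^{*}) \leq \alpha - c$ and $g^{*}(-y^{*}) \leq c$; adding and relabelling $\tilde y^{*} := x^{*}+y^{*}$ yields $f^{*}(\tilde y^{*}) + g^{*}(x^{*}-\tilde y^{*}) \leq \alpha$, which combined with the easy direction shows that the infimum is attained at $\tilde y^{*}$ and equals $\alpha$.

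The main obstacle is the separation step: in an infinite-dimensional Banach space, pointwise domination of a convex function by a concave function is not by itself enough to guarantee a continuous affine separator, so a topological constraint qualification is needed. The hypothesis that $g$ is continuous at some $x_{0}\in\dom f\cap\dom g$ is precisely the classical Moreau-Rockafellar constraint qualification; it ensures that the hypograph of $-g$ has nonempty interior and meets the epigraph of the convex function $f-\langle\cdot,x^{*}\rangle+\alpha$ appropriately, which is what permits the Hahn-Banach separation and, crucially, upgrades the infimum to an attained minimum.
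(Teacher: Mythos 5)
The paper states this Fact without proof, citing Rockafellar, Simons, and Z\u{a}linescu, and your argument is exactly the classical separation proof given in those sources: the trivial inequality $(f+g)^*(x^*)\le f^*(y^*)+g^*(x^*-y^*)$ obtained by splitting the supremum, followed by a continuous affine functional sandwiched between $f-\scal{\cdot}{x^*}+\alpha$ and $-g$, with the continuity of $g$ at $x_0\in\dom f$ supplying the nonempty interior needed for the Eidelheit separation of the epigraph of $f-\scal{\cdot}{x^*}+\alpha$ from the hypograph of $-g$. Your sketch is correct; the only step left implicit is the verification that the separating functional $(y^*,s)\in X^*\times\RR$ has $s\neq 0$ (i.e.\ the hyperplane is non-vertical), which is precisely where the hypothesis $x_0\in\dom f\cap\inte\dom g$ is used.
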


 \begin{fact}[Borwein] \label{f:FJB4}
\emph{(See \cite[Theorem~1]{Borwein2}
or \cite[Theorem~3.1.1]{Zalinescu}.)}
Let $f: X\rightarrow\RX$ be a proper lower semicontinuous and convex function.
Let $\varepsilon>0$ and $\beta\geq0$
(where $\tfrac{1}{0}=\infty$). Assume that $x_0\in\dom f$
 and $x^*_0\in\partial_{\varepsilon} f(x_0)$.
There exist  $x_{\varepsilon}\in X, x^*_{\varepsilon}\in X^*$ such that
\begin{align*}
&\|x_{\varepsilon}-x_0\|+\beta\left|\langle x_{\varepsilon}-x_0, x^*_0\rangle\right|
\leq \sqrt{\varepsilon}, \quad x^*_{\varepsilon}\in\partial
f(x_{\varepsilon}),\\
&\|x^*_{\varepsilon}-x^*_0\|\leq \sqrt{\varepsilon}(1+\beta\|x^*_0\|),
\quad \left|\langle x_{\varepsilon}
-x_0, x^*_{\varepsilon}\rangle\right|\leq\varepsilon+\frac{ \sqrt{\varepsilon}}{\beta}
.\end{align*}
\end{fact}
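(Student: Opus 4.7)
The plan is to apply Ekeland's variational principle, but on $X$ re-metrised by an equivalent norm that already penalises the linear functional $\langle\cdot,x_0^*\rangle$; this single twist is what upgrades the classical Br\o ndsted--Rockafellar conclusion $\|x_\varepsilon-x_0\|\leq\sqrt{\varepsilon}$ to the reinforced bound involving $\beta|\langle x_\varepsilon-x_0,x_0^*\rangle|$. Set $g(x):=f(x)-\langle x,x_0^*\rangle$ and $\rho(x):=\|x\|+\beta|\langle x,x_0^*\rangle|$. The hypothesis $x_0^*\in\partial_{\varepsilon}f(x_0)$ translates exactly to $g(x_0)\leq\inf_X g+\varepsilon$, and the inequalities $\|x\|\leq\rho(x)\leq(1+\beta\|x_0^*\|)\|x\|$ show $\rho$ to be an equivalent norm on $X$ (with the convention $1/0=\infty$ handling the degenerate case $\beta=0$, in which nothing new is claimed).

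Applying Ekeland's variational principle to $g$ on $(X,\rho)$ with both tolerance and radius equal to $\sqrt{\varepsilon}$ produces $x_\varepsilon$ with $\rho(x_\varepsilon-x_0)\leq\sqrt{\varepsilon}$, $g(x_\varepsilon)\leq g(x_0)$, and $g(y)+\sqrt{\varepsilon}\,\rho(y-x_\varepsilon)\geq g(x_\varepsilon)$ for every $y\in X$. The first bound immediately yields $\|x_\varepsilon-x_0\|+\beta|\langle x_\varepsilon-x_0,x_0^*\rangle|\leq\sqrt{\varepsilon}$. The third, coupled with the Moreau--Rockafellar sum rule in the form of Fact~\ref{f:F4} (whose hypotheses are met because $\rho(\cdot-x_\varepsilon)$ is everywhere continuous), gives $0\in\partial g(x_\varepsilon)+\sqrt{\varepsilon}\,\partial\rho(0)$. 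A direct calculation identifies $\partial\rho(0)=B_{X^*}+\beta[-x_0^*,x_0^*]$, whose elements have dual norm at most $1+\beta\|x_0^*\|$. Unravelling $\partial g(\cdot)=\partial f(\cdot)-x_0^*$ then produces an $x_\varepsilon^*\in\partial f(x_\varepsilon)$ with $\|x_\varepsilon^*-x_0^*\|\leq\sqrt{\varepsilon}(1+\beta\|x_0^*\|)$.

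For the remaining estimate, combine the subgradient inequality $f(x_0)\geq f(x_\varepsilon)+\langle x_0-x_\varepsilon,x_\varepsilon^*\rangle$ (from $x_\varepsilon^*\in\partial f(x_\varepsilon)$) with $f(x_\varepsilon)\geq f(x_0)+\langle x_\varepsilon-x_0,x_0^*\rangle-\varepsilon$ (from $x_0^*\in\partial_\varepsilon f(x_0)$): adding them produces $\langle x_\varepsilon-x_0,x_\varepsilon^*-x_0^*\rangle\leq\varepsilon$, while chaining them through $f(x_\varepsilon)-f(x_0)$ produces $\langle x_\varepsilon-x_0,x_\varepsilon^*\rangle\geq\langle x_\varepsilon-x_0,x_0^*\rangle-\varepsilon$. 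Feeding $\beta|\langle x_\varepsilon-x_0,x_0^*\rangle|\leq\sqrt{\varepsilon}$ into both ends then confines $\langle x_\varepsilon-x_0,x_\varepsilon^*\rangle$ to the interval $[-\varepsilon-\sqrt{\varepsilon}/\beta,\,\varepsilon+\sqrt{\varepsilon}/\beta]$, as required. The main obstacle I anticipate is pinning down $\partial\rho(0)$ and verifying the sharp dual-norm bound for its elements; once that is in hand, Ekeland and Fact~\ref{f:F4} do essentially all the remaining work, and the two output inequalities involving $\langle x_\varepsilon-x_0,x_\varepsilon^*\rangle$ fall out of nothing more than the two subgradient inequalities already in play.
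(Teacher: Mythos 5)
The paper itself does not prove this Fact---it is imported verbatim from \cite[Theorem~1]{Borwein2} and \cite[Theorem~3.1.1]{Zalinescu}---so the comparison can only be against the standard argument in those sources. Your proposal does reconstruct that argument: Ekeland's variational principle applied to $g=f-\langle\cdot,x_0^*\rangle$ on $X$ equipped with the equivalent norm $\rho(x)=\|x\|+\beta|\langle x,x_0^*\rangle|$, followed by the exact sum rule to split $0\in\partial\bigl(g+\sqrt{\varepsilon}\,\rho(\cdot-x_\varepsilon)\bigr)(x_\varepsilon)$, and the identification $\partial\rho(0)=B_{X^*}+\beta[-x_0^*,x_0^*]$. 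All of that is correct, including the boundedness of $g$ from below (needed for Ekeland) and the norm estimate $\|x_\varepsilon^*-x_0^*\|\le\sqrt{\varepsilon}(1+\beta\|x_0^*\|)$.

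There is, however, a genuine error in your derivation of the final estimate. Adding the inequalities $f(x_0)\ge f(x_\varepsilon)+\langle x_0-x_\varepsilon,x_\varepsilon^*\rangle$ and $f(x_\varepsilon)\ge f(x_0)+\langle x_\varepsilon-x_0,x_0^*\rangle-\varepsilon$ yields
\begin{align*}
\langle x_\varepsilon-x_0,\,x_\varepsilon^*-x_0^*\rangle\;\ge\;-\varepsilon,
\end{align*}
i.e.\ the approximate monotonicity \emph{lower} bound, not the upper bound $\le\varepsilon$ that you assert; your ``chaining'' step reproduces the same lower bound in disguise. Consequently the upper half of the interval $[-\varepsilon-\sqrt{\varepsilon}/\beta,\ \varepsilon+\sqrt{\varepsilon}/\beta]$ is not established by the two subgradient inequalities alone, and your closing claim that they ``do all the work'' is false. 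The fix is available from material you already have: the decomposition gives $x_0^*-x_\varepsilon^*=\sqrt{\varepsilon}\,w^*$ with $w^*\in\partial\rho(0)$, and since $\rho$ is a norm, $|\langle x,w^*\rangle|\le\rho(x)$ for all $x$; taking $x=x_\varepsilon-x_0$ and using $\rho(x_\varepsilon-x_0)\le\sqrt{\varepsilon}$ gives the two-sided bound $|\langle x_\varepsilon-x_0,\,x_\varepsilon^*-x_0^*\rangle|\le\varepsilon$, whence
\begin{align*}
|\langle x_\varepsilon-x_0,\,x_\varepsilon^*\rangle|\;\le\;\varepsilon+|\langle x_\varepsilon-x_0,\,x_0^*\rangle|\;\le\;\varepsilon+\tfrac{\sqrt{\varepsilon}}{\beta}.
\end{align*}
With that replacement the proof is complete and agrees with the cited sources.
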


\begin{fact}[Simons]
\emph{(See \cite[Theorem~17]{Si4} or \cite[Theorem~37.1]{Si2}.)}
\label{FTSim:1}
Let $A:X\rightrightarrows X^*$ be a maximally monotone
operator such that $A$ is of type (D). Then $A$ is type
of (FP).
\end{fact}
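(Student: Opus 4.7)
The plan is to route through type (NI): first show type (D)$\Rightarrow$type (NI), then deduce type (FP) from (NI).

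For the first implication, I would suppose $A$ is of type (D) but that (NI) fails at some $(x^{**},x^*)\in X^{**}\times X^*$; equivalently,
\[\delta:=\inf_{(a,a^*)\in\gra A}\langle a-x^{**},a^*-x^*\rangle>0.\]
Since $\delta\geq 0$, the defining property of type (D) supplies a bounded net $(a_\alpha,a^*_\alpha)_{\alpha\in\Gamma}$ in $\gra A$ with $a_\alpha\weaklys x^{**}$ and $a^*_\alpha\to x^*$ in norm. Boundedness of $(a_\alpha)$ together with the norm-convergence of $(a^*_\alpha)$ forces $\langle a_\alpha,a^*_\alpha\rangle$, $\langle a_\alpha,x^*\rangle$ and $\langle x^{**},a^*_\alpha\rangle$ all to tend to $\langle x^{**},x^*\rangle$, so $\langle a_\alpha-x^{**},a^*_\alpha-x^*\rangle\to 0$; since every term of this net is bounded below by $\delta>0$, this is a contradiction.

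For the second implication, suppose $U\subseteq X^*$ is open and convex with $U\cap\ran A\neq\varnothing$, $x^*\in U$, and $(x,x^*)$ is monotonically related to $\gra A\cap(X\times U)$. By maximal monotonicity it will suffice to prove that $(x,x^*)$ is in fact monotonically related to the whole of $\gra A$. Equivalently, writing
\[F_A(z,z^*):=\sup_{(a,a^*)\in\gra A}\big(\langle z,a^*\rangle+\langle a,z^*\rangle-\langle a,a^*\rangle\big)\]
for the Fitzpatrick function, one needs $F_A(x,x^*)=\langle x,x^*\rangle$. The hypothesis already yields that the supremum restricted to $\gra A\cap(X\times U)$ is $\leq\langle x,x^*\rangle$, while the Fitzpatrick inequality gives $F_A(x,x^*)\geq\langle x,x^*\rangle$, so the remaining task is to exclude the possibility that some pair $(\bar a,\bar a^*)\in\gra A$ with $\bar a^*\notin U$ pushes the unrestricted supremum strictly above $\langle x,x^*\rangle$. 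This is where (NI) and the openness of $U$ combine: any such violator, via a Hahn--Banach separation of $\bar a^*$ from the neighbourhood $U$ of $x^*$ and via Fact~\ref{f:F4} (to split off an indicator of $U$ from the sum defining the Fitzpatrick conjugate) together with Fact~\ref{f:FJB4} (to convert approximate subgradients into exact graph elements), should deliver a bidual pair $(x^{**},x^*)$ witnessing a failure of (NI).

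The main obstacle will be exactly this last passage: one starts with a monotone relation holding only on $\gra A\cap(X\times U)$ yet must derive a global contradiction with (NI) in the bidual. The key leverage is that $U$ is an open neighbourhood of $x^*$, so any net produced by (D)/(NI) whose second coordinates converge in norm to $x^*$ eventually lies inside $X\times U$, letting the partial monotone-relation hypothesis be invoked; the remaining manipulation is standard convex-analytic bookkeeping with $F_A$ and its Fenchel conjugate.
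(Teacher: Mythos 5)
Your first step, (D)$\Rightarrow$(NI), is correct and complete: the negation of (NI) at $(x^{**},x^*)$ is exactly $\inf_{(a,a^*)\in\gra A}\langle a-x^{**},a^*-x^*\rangle>0$, the type (D) hypothesis then supplies a bounded net in $\gra A$ converging weak*$\times$strong to $(x^{**},x^*)$, and boundedness together with norm convergence of the second coordinates forces $\langle a_\alpha-x^{**},a^*_\alpha-x^*\rangle\to 0$, contradicting the uniform lower bound $\delta>0$. (This is Simons' classical argument, and in this paper it is in any case already available through Fact~\ref{PF:Su1}, so the real burden of the statement is entirely in your second step.)

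That second step, (NI)$\Rightarrow$(FP), is where the content of the theorem lives, and your proposal does not prove it. You correctly reduce to excluding a violator $(\bar a,\bar a^*)\in\gra A$ with $\bar a^*\notin U$ and $\langle x-\bar a,\,x^*-\bar a^*\rangle<0$, but the passage from such a violator to ``a bidual pair witnessing a failure of (NI)'' is precisely the theorem, and the tools you name do not assemble into an argument as described. Concretely: (i) no candidate $(x^{**},x^*)$ at which the (NI) inequality would fail is ever constructed, and separating the single point $\bar a^*$ from $U$ does not produce one; (ii) for a general maximally monotone $A$ the graph is not convex, so the function $\langle\cdot,\cdot\rangle+\iota_{\gra A}$ of Fact~\ref{f:referee} (which drives the linear-relation arguments in this paper) is unavailable, and one must work with the Fitzpatrick function, for which Fact~\ref{f:FJB4} converts approximate subgradients into exact subgradients of that convex function at nearby points --- not into elements of $\gra A$; (iii) the ingredient actually needed is a quantitative consequence of (NI): when $(x,x^*)\notin\gra A$, the monotonicity violation can be witnessed by some $(\bar a,\bar a^*)\in\gra A$ with $\|\bar a^*-x^*\|$ as small as desired, hence with $\bar a^*\in U$, which contradicts the hypothesis that $(x,x^*)$ is monotonically related to $\gra A\cap(X\times U)$. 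Establishing that norm control on the dual coordinate is the substance of Simons' proof (\cite[Theorem~17]{Si4}, \cite[Theorem~37.1]{Si2}), and it is exactly the piece your sketch defers and acknowledges as ``the main obstacle.'' Note that the paper itself gives no proof of this Fact (it is quoted from Simons), so a complete argument here would be genuinely new material; as written, yours is a plan whose decisive step remains open.
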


\begin{fact}[Simons]
\emph{(See \cite[Lemma~19.7 and Section~22]{Si2}.)}
\label{f:referee}
Let $A:X\rightrightarrows X^*$ be a monotone operator such
 that $\operatorname{gra} A$ is convex with $\operatorname{gra} A
\neq\varnothing$.
Then the function
\begin{equation}
g\colon X\times X^* \rightarrow \left]-\infty,+\infty\right]\colon
(x,x^*)\mapsto \langle x, x^*\rangle + \iota_{\operatorname{gra} A}(x,x^*)
\end{equation}
is proper and convex.
\end{fact}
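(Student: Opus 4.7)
The plan is to handle properness by inspection and convexity by a single algebraic identity that converts the monotonicity inequality into exactly the convexity defect of the pairing along a segment.

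Properness is immediate: since $\gra A\neq\varnothing$, pick any $(x_0,x_0^*)\in\gra A$; then $g(x_0,x_0^*)=\langle x_0,x_0^*\rangle\in\RR$, so $g$ is not identically $+\infty$, and $g$ never takes the value $-\infty$ since the pairing $\langle\cdot,\cdot\rangle$ is real-valued on $X\times X^*$.

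For convexity, I would fix $(x_1,x_1^*),(x_2,x_2^*)\in\gra A$ and $t\in[0,1]$. Since $\gra A$ is convex, the convex combination $(x_t,x_t^*):=\bigl(tx_1+(1-t)x_2,\,tx_1^*+(1-t)x_2^*\bigr)$ again lies in $\gra A$, so $\iota_{\gra A}$ behaves linearly (in fact vanishes) along the segment, and the issue reduces to bounding $\langle x_t,x_t^*\rangle$. Expanding the pairing bilinearly and collecting terms yields the identity
\begin{equation*}
t\langle x_1,x_1^*\rangle+(1-t)\langle x_2,x_2^*\rangle-\langle x_t,x_t^*\rangle
=t(1-t)\langle x_1-x_2,x_1^*-x_2^*\rangle.
\end{equation*}
Monotonicity of $A$ makes the right-hand side nonnegative, giving $g(x_t,x_t^*)\le tg(x_1,x_1^*)+(1-t)g(x_2,x_2^*)$. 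Combined with the trivial case where one of the points lies outside $\gra A$ (so the right-hand side is $+\infty$), this establishes convexity of $g$ on all of $X\times X^*$.

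There is no real obstacle here beyond spotting the algebraic identity above; the whole argument is a two-line computation once one recognizes that the failure of the bilinear form to be convex along a segment in $\gra A$ is exactly the monotonicity quadratic $\langle x_1-x_2,x_1^*-x_2^*\rangle$, scaled by $t(1-t)\ge0$. No separation, Fenchel duality, or properties of lower semicontinuity are needed — only convexity of $\gra A$ and the definition of monotonicity.
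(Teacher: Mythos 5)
Your proof is correct and is exactly the standard argument behind this result, which the paper itself does not reprove but simply cites from Simons' book: the identity $t\langle x_1,x_1^*\rangle+(1-t)\langle x_2,x_2^*\rangle-\langle x_t,x_t^*\rangle=t(1-t)\langle x_1-x_2,x_1^*-x_2^*\rangle$ together with monotonicity is precisely how the cited lemma is established. Nothing is missing; the properness argument and the handling of points outside $\gra A$ are both in order.
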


\begin{fact}[Marques Alves and Svaiter]\emph{(See \cite[Theorem~4.4]{MarSva}.)}
\label{PF:Su1} Let $A:X \To X^*$ be maximally  monotone, and
let  $F:X\rightarrow \RX$ be  a representative  of $A$.
Then the following are equivalent.
\begin{enumerate}
\item\label{MSF:1} $A$ is type of (D).
\item\label{MSF:2} $A$ is of type (NI).
\item \label{MSF:3}For every $(x_0,x^*_0)\in X\times X^*$,
\begin{align*}
\inf_{(x,x^*)\in X\times X^*}\left[F_{(x_0,x^*_0)}(x,x^*)+\tfrac{1}{2}\|x\|^2
+\tfrac{1}{2}\|x^*\|^2\right]=0.\end{align*}

\end{enumerate}
\end{fact}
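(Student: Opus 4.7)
The plan is to establish (i) $\Rightarrow$ (ii), (ii) $\Leftrightarrow$ (iii), and (ii) $\Rightarrow$ (i), with Fenchel duality between $F$ and $F^*$ as the principal technical device. Throughout write $h(x,x^*)=\tfrac12\|x\|^2+\tfrac12\|x^*\|^2$.

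For (i) $\Rightarrow$ (ii), use the algebraic identity $-\langle a-x^{**},a^*-x^*\rangle=\langle a,x^*\rangle+\langle a^*,x^{**}\rangle-\langle a,a^*\rangle-\langle x^{**},x^*\rangle$, which recasts (NI) as $\inf_{(a,a^*)\in\gra A}\langle a-x^{**},a^*-x^*\rangle\leq 0$ for every $(x^{**},x^*)$. If this infimum were strictly positive at some pair, the pair would be bidual-monotonically related to $\gra A$, and (D) would deliver a bounded net $(a_\alpha,a^*_\alpha)\to(x^{**},x^*)$ weak*--strong; the estimate $|\langle a_\alpha-x^{**},a^*_\alpha-x^*\rangle|\leq\|a_\alpha-x^{**}\|\cdot\|a^*_\alpha-x^*\|\to 0$ contradicts positivity.

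For (ii) $\Leftrightarrow$ (iii), observe that (NI) is translation invariant and $F_{(x_0,x_0^*)}$ represents $\gra A-(x_0,x_0^*)$, so (iii) reduces to $\inf[F+h]=0$ for every (NI) representative. Since $\langle x,x^*\rangle+h(x,x^*)\geq\tfrac12(\|x\|-\|x^*\|)^2\geq 0$ gives $F+h\geq 0$, one has $(F+h)^*(0,0)\leq 0$, and by Fact~\ref{f:F4} this equals $\min_{y^*,y^{**}}[F^*(-y^*,-y^{**})+\tfrac12\|y^*\|^2+\tfrac12\|y^{**}\|^2]$. Under (NI) the inequality $F^*(z^*,z^{**})\geq\langle z^{**},z^*\rangle$ holds on $X^*\times X^{**}$ (because $F^*(z^*,z^{**})\geq\langle a,z^*\rangle+\langle a^*,z^{**}\rangle-\langle a,a^*\rangle$ for each $(a,a^*)\in\gra A$, and (NI) bounds this supremum below by $\langle z^{**},z^*\rangle$); the same AM--GM estimate then forces the minimum $\geq 0$, giving (iii). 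Conversely, applying Fact~\ref{f:F4} to $(F_{(x_0,x_0^*)}+h)^*(0,0)=0$ for a general $(x_0,x_0^*)$ yields the pointwise bound $F^*(z^*,z^{**})\geq\langle x_0,z^*\rangle+\langle x_0^*,z^{**}\rangle-\langle x_0,x_0^*\rangle-\tfrac12\|z^*-x_0^*\|^2-\tfrac12\|z^{**}-x_0\|^2$ for all $(z^*,z^{**})$ and all $(x_0,x_0^*)\in X\times X^*$. Taking the supremum over $(x_0,x_0^*)\in X\times X^*$ and invoking the Fenchel duality identity $\sup_{u^*\in X^*}[-\langle u^*,w\rangle-\tfrac12\|u^*\|^2]=\tfrac12\|w\|^2$ valid for $w\in X^{**}$ collapses the right-hand side to $\langle z^{**},z^*\rangle$, delivering (NI).

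The main obstacle is (ii) $\Rightarrow$ (i), the converse of the classical Gossez-type implication. Given $(x^{**},x^*)$ bidual-monotonically related to $\gra A$, I must construct a bounded net $(a_\alpha,a^*_\alpha)\in\gra A$ converging weak*--strong to $(x^{**},x^*)$. The plan leverages (iii), now available: for each $x_0\in X$ weak*-approximating $x^{**}$ via Goldstine, and each $\varepsilon>0$, apply (iii) at $(x_0,x^*)$ to produce $(x_\varepsilon,x_\varepsilon^*)$ with $F_{(x_0,x^*)}(x_\varepsilon,x_\varepsilon^*)+h(x_\varepsilon,x_\varepsilon^*)<\varepsilon$; then $(x_0+x_\varepsilon,x^*+x_\varepsilon^*)$ is an approximate graph point in the $\varepsilon$-subdifferential sense for the representative $F$. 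Invoking Borwein's sharpened Br{\o}ndsted--Rockafellar (Fact~\ref{f:FJB4}) perturbs this approximation into an honest element $(a_\varepsilon,a^*_\varepsilon)\in\gra A$ at $O(\sqrt{\varepsilon})$ displacement. The delicate calibration lies in the free parameter $\beta$ of Fact~\ref{f:FJB4}: it must be chosen to simultaneously control the primal-norm perturbation, the dual-norm perturbation, and the cross-pairing against $x^*$, while the diagonal indexing over $x_0$ and $\varepsilon$ must yield a net that stays bounded in $X^{**}\times X^*$, converges weak* in the first coordinate to $x^{**}$, and converges in norm in the second to $x^*$.
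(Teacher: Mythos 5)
First, a point of reference: the paper does not prove this statement at all --- it is quoted verbatim from Marques Alves and Svaiter \cite{MarSva}, so there is no internal proof to compare yours against and I judge the attempt on its own merits. Your (i)$\Rightarrow$(ii) and (ii)$\Rightarrow$(iii) are correct and standard. The implication (iii)$\Rightarrow$(ii), however, has a genuine gap. Your supremum computation is right and does yield $F^*(z^*,z^{**})\geq\langle z^{**},z^*\rangle$ on $X^*\times X^{**}$. But type (NI) as defined here is the inequality $g^*(z^*,z^{**})\geq\langle z^{**},z^*\rangle$ for the specific function $g=\langle\cdot,\cdot\rangle+\iota_{\gra A}$, since the supremum in the definition runs over $\gra A$ only. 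Every representative satisfies $F\leq g$ (they agree on $\gra A$ and $g=+\infty$ elsewhere), hence $F^*\geq g^*$, so the inequality you derive is \emph{weaker} than (NI) and does not imply it. It coincides with (NI) only for the maximal representative $F=g$, whereas the Fact is asserted for an arbitrary representative (e.g.\ the Fitzpatrick function, for which $F^*$ can strictly dominate $g^*$). This is precisely why the source closes the loop by proving (iii)$\Rightarrow$(D) and then (D)$\Rightarrow$(NI), rather than (iii)$\Rightarrow$(NI) directly.

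Second, (ii)$\Rightarrow$(i) --- the heart of the theorem --- is only a plan, and the plan as described would fail quantitatively. An almost-minimizer $(x_\varepsilon,x_\varepsilon^*)$ of $F_{(x_0,x^*)}+h$ satisfies only $\langle x_\varepsilon,x_\varepsilon^*\rangle+\tfrac12\|x_\varepsilon\|^2+\tfrac12\|x_\varepsilon^*\|^2<\varepsilon$, which controls the difference $\bigl|\|x_\varepsilon\|-\|x_\varepsilon^*\|\bigr|$ but not the size of the displacement; so the resulting approximate graph point need not be near $(x_0,x^*)$. Feeding in the monotone-relatedness of $(x^{**},x^*)$ to $\gra A$ bounds $\|x_\varepsilon^*\|$ only by a multiple of $\|x_0-x^{**}\|_{X^{**}}$, and this does \emph{not} tend to $0$ along a Goldstine net, since $x_0\weaklys x^{**}$ carries no norm information. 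Consequently the dual coordinates $x^*+x_\varepsilon^*$ of your graph points need not converge in norm to $x^*$, which is exactly what type (D) requires. The genuine argument is a considerably more delicate deployment of Fact~\ref{f:FJB4} with careful bookkeeping of the parameter $\beta$ --- of the kind the paper actually carries out in full in its proof of Theorem~\ref{TypeD:1}\ref{MaT:3}$\Rightarrow$\ref{MaT:2} --- and naming the ingredients is not a substitute for performing that calibration.
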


\begin{remark}
The implication
\ref{MSF:1}$\Rightarrow$\ref{MSF:2} in Fact~\ref{PF:Su1}
 was first proved by Simons (see \cite[Lemma~15]{SiNI} or \cite[Theorem~36.3(a)]{Si2}).
\end{remark}

\begin{fact}[Cross]\label{Rea:1}
Let $A \colon X \To X^*$ be a linear relation.
Then the following hold.
\begin{enumerate}
\item \label{Th:28}$Ax=x^* +A0,\quad\forall x^*\in Ax.$
\item \label{Sia:2b}
$(\forall x^{**}\in \dom A^*)(\forall y\in\dom A)$
$\langle A^*x^{**},y\rangle=\langle x^{**}, Ay\rangle$ is a singleton.

\item \label{Sia:1} $(\dom A)^{\bot}=A^*0$.
 If $\gra A$ is  closed, then $(\dom A^*)_{\bot}=A0$.
\end{enumerate}
\end{fact}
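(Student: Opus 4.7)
Item \ref{Th:28} is a direct consequence of the linearity of $\gra A$: if $x^*, y^* \in Ax$, then $(x,x^*),(x,y^*)\in\gra A$, so their difference $(0,y^*-x^*)\in\gra A$, i.e., $y^*-x^*\in A0$; conversely, adding any $(0,z^*)\in\gra A$ to $(x,x^*)$ shows $x^*+A0\subseteq Ax$.

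Item \ref{Sia:2b} is obtained by unpacking the definition of the adjoint. Fix $x^{**}\in\dom A^*$ and choose $u^*\in A^*x^{**}$; then $(u^*,-x^{**})\in(\gra A)^{\bot}$, and pairing with any $(y,y^*)\in\gra A$ yields $\langle u^*,y\rangle=\langle x^{**},y^*\rangle$. To see this common value depends only on $y$ (and not on the chosen $y^*\in Ay$ or $u^*\in A^*x^{**}$), use \ref{Th:28}: any two representatives of $Ay$ differ by some $w^*\in A0$, and pairing $(0,w^*)\in\gra A$ with $(u^*,-x^{**})\in(\gra A)^{\bot}$ gives $\langle x^{**},w^*\rangle=0$; an analogous computation handles the other variable. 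Hence $\langle A^*x^{**},y\rangle=\langle x^{**},Ay\rangle$ is unambiguously defined as a single real number.

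For \ref{Sia:1}, the equality $A^*0=(\dom A)^{\bot}$ is immediate from unwinding the definition: $x^*\in A^*0$ is equivalent to $(x^*,0)\in(\gra A)^{\bot}$, which says $\langle x^*,y\rangle=0$ for every $y\in\dom A$, i.e., $x^*\in(\dom A)^{\bot}$. The inclusion $A0\subseteq(\dom A^*)_{\bot}$ is obtained by pairing any $(0,x^*)\in\gra A$ with $(u^*,-x^{**})\in(\gra A)^{\bot}$ (for arbitrary $x^{**}\in\dom A^*$ and $u^*\in A^*x^{**}$), which forces $\langle x^{**},x^*\rangle=0$. The reverse inclusion is the main obstacle and is precisely where the closedness hypothesis enters, via Hahn--Banach separation: if $x^*\in(\dom A^*)_{\bot}$ but $(0,x^*)\notin\gra A$, then separating $(0,x^*)$ from the closed subspace $\gra A$ in $X\times X^*$ produces $(v^*,v^{**})\in(\gra A)^{\bot}$ with $\langle v^{**},x^*\rangle\neq 0$; setting $x^{**}:=-v^{**}$ yields $x^{**}\in\dom A^*$ (with $v^*\in A^*x^{**}$), which contradicts $x^*\in(\dom A^*)_{\bot}$.
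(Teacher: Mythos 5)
Your proof is correct. Note that the paper itself does not prove this Fact at all: it simply cites Cross's book (Propositions I.2.8(a), III.1.2, and III.1.4(b)\&(d)), so your self-contained argument is strictly more than what the paper supplies. Each step checks out against the paper's conventions: item \ref{Th:28} is exactly the subspace property of $\gra A$; for item \ref{Sia:2b}, the identity $\langle u^*,y\rangle=\langle x^{**},y^*\rangle$ valid for \emph{every} $u^*\in A^*x^{**}$ and \emph{every} $y^*\in Ay$ already forces both sets to be the same singleton, so your additional verification via \ref{Th:28} is sound but not even needed; and for item \ref{Sia:1}, the first equality and the inclusion $A0\subseteq(\dom A^*)_{\bot}$ are indeed pure definition-unwinding (consistent with the sign convention $(x^*,-x^{**})\in(\gra A)^{\bot}$), while the reverse inclusion is correctly identified as the only place where closedness of $\gra A$ is used, through Hahn--Banach separation of $(0,x^*)$ from the closed subspace $\gra A$ in $(X\times X^*,\|\cdot\|_1)$ and the identification $(X\times X^*)^*=X^*\times X^{**}$. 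The sign bookkeeping in setting $x^{**}:=-v^{**}$ so that $(x^{**},v^*)\in\gra A^*$ is also right. No gaps.
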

\begin{proof}
\ref{Th:28}:  See \cite[Proposition I.2.8(a)]{Cross}.
\ref{Sia:2b}: See \cite[Proposition~III.1.2]{Cross}.
\ref{Sia:1} : See \cite[Proposition~III.1.4(b)\&(d)]{Cross}.
\end{proof}

\begin{lemma}\label{FCLL:1}
Let $A:X\To X^*$ be a maximally monotone linear relation. Then $(\dom A)^{\bot}=A0=A^*0=(\dom A^*)_{\bot}$.
\end{lemma}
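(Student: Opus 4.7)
The plan is to reduce the four-way identity to the single equality $A0 = A^*0$ by invoking Fact~\ref{Rea:1}\ref{Sia:1} twice. Since $A$ is maximally monotone, $\gra A$ is norm-closed (standard), so the Cross identities give $(\dom A)^{\bot} = A^*0$ and $(\dom A^*)_{\bot} = A0$ immediately. Everything else therefore comes down to proving $A0 = A^*0$ directly, with linearity of $\gra A$ giving one inclusion and maximality of $A$ giving the other.

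For the inclusion $A0 \subseteq A^*0$, I would fix $y^* \in A0$ and any $(x,x^*)\in\gra A$. Because $\gra A$ is a linear subspace, $(tx,tx^*)\in \gra A$ for every $t\in\RR$; monotonicity against $(0,y^*)\in\gra A$ then yields
\begin{equation*}
t^2\langle x,x^*\rangle - t\langle x,y^*\rangle \geq 0 \qquad (\forall t\in\RR).
\end{equation*}
Dividing by $t>0$ and by $t<0$ and letting $t\to 0^{\pm}$ forces $\langle x,y^*\rangle = 0$, so $y^*\in(\dom A)^{\bot} = A^*0$.

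For the reverse inclusion $A^*0 \subseteq A0$, I would fix $y^* \in A^*0 = (\dom A)^{\bot}$ and verify that $(0,y^*)$ is monotonically related to $\gra A$: for any $(a,a^*)\in \gra A$,
\begin{equation*}
\langle a-0,a^*-y^*\rangle = \langle a,a^*\rangle - \langle a,y^*\rangle = \langle a,a^*\rangle \geq 0,
\end{equation*}
where the last inequality uses $(0,0)\in\gra A$ (a linear subspace) together with monotonicity of $A$ applied to the pair $((0,0),(a,a^*))$. Maximal monotonicity of $A$ then yields $(0,y^*)\in\gra A$, i.e.\ $y^*\in A0$.

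I do not expect a real obstacle here: the argument is essentially a short chase through the definitions, with the two genuinely non-trivial inputs being (i) closedness of $\gra A$ in order to quote the second half of Fact~\ref{Rea:1}\ref{Sia:1}, and (ii) the scaling trick $(tx,tx^*)\in\gra A$ which converts a single monotonicity inequality into a \emph{linear} estimate in $t$, thereby squeezing out orthogonality. The cleanest exposition is to state the two Cross identities first, then dispatch $A0\subseteq A^*0$ by the scaling argument and $A^*0\subseteq A0$ by the maximality argument, and finally chain the four equalities together.
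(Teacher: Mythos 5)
Your proof is correct and in essence identical to the paper's: the paper likewise reduces everything to $(\dom A)^{\bot}=A0$, proving one inclusion by maximality (via the monotone extension $A+N_{\dom A}=A+(\dom A)^{\bot}$, of which your ``monotonically related'' check for $(0,y^*)$ is the pointwise version) and the other by a scaling argument exploiting linearity, before quoting Fact~\ref{Rea:1}\ref{Sia:1} for the remaining two identities. The only cosmetic differences are the order in which Cross's identities are invoked and that you scale the point $(x,x^*)$ of $\gra A$ whereas the paper scales the element of $A0$.
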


\begin{proof}
(See also  \cite[Theorem~3.2(iii)]{BWY3} when $X$ is reflexive.)
Since $A+N_{\dom A}=A+(\dom A)^{\perp}$
is a monotone extension of $A$ and $A$ is
maximally monotone, we must have
$A+(\dom A)^{\perp}=A$. Then $A0+(\dom A)^{\perp}=A0$. As $0\in A0$,
$(\dom A)^{\perp}\subseteq A0.$

On the other hand,
take $x\in \dom A$.
Then there exists $x^*\in X^*$ such that
  $(x,x^*)\in \gra A.$  By
  monotonicity of $A$ and since $(0,A0)\subseteq \gra A$,
 we have  $\langle x, x^*\rangle\geq \sup\langle x,A0\rangle$.
Since $A0$ is a linear subspace,
we obtain
$x\bot A0$. This implies
$A0\subseteq (\dom A)^\bot$.

Combining the above, we have $(\dom A)^{\bot}=A0$.
Thus, by Fact~\ref{Rea:1}\ref{Sia:1}, $(\dom A)^{\bot}=A0=A^*0=(\dom A^*)_{\bot}$.
\end{proof}

\begin{lemma}\label{FCLL:2}
Let $A:X\To X^*$ be a maximally monotone linear relation. Then $\langle x^{**}, A^*x^{**}\rangle$
is single-valued for every $x^{**}\in\dom A^*$.
\end{lemma}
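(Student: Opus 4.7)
The plan is to reduce the claim to showing that $\langle x^{**}, z^*\rangle = 0$ for every $z^* \in A^*0$, which is exactly what Lemma~\ref{FCLL:1} delivers.

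More concretely, first I observe that $A^*$ is itself a linear relation: indeed, $\gra A = \menge{(x,x^*)\in X\times X^*}{x^* \in Ax}$ is a linear subspace of $X\times X^*$, hence $(\gra A)^\perp$ is a linear subspace of $X^*\times X^{**}$, and since the map $(x^{**},x^*)\mapsto (x^*,-x^{**})$ is linear, the set $\gra A^*$ is a linear subspace of $X^{**}\times X^*$. This allows me to apply the elementary structural identity of Fact~\ref{Rea:1}\ref{Th:28} to $A^*$: for any $x^{**}\in\dom A^*$ and any $y_1^*\in A^*x^{**}$, one has $A^*x^{**}=y_1^*+A^*0$.

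Now fix $x^{**}\in\dom A^*$ and take any $y_1^*, y_2^* \in A^*x^{**}$. By the previous paragraph, $y_2^*-y_1^* \in A^*0$. By Lemma~\ref{FCLL:1}, $A^*0 = (\dom A^*)_{\bot}$, so $y_2^*-y_1^* \in (\dom A^*)_\bot$, which by definition gives $\langle x^{**}, y_2^*-y_1^*\rangle = 0$, i.e., $\langle x^{**}, y_1^*\rangle = \langle x^{**}, y_2^*\rangle$. Hence the value $\langle x^{**}, A^*x^{**}\rangle$ is independent of the chosen representative, as required.

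There is really no main obstacle here: the substantive content was already packed into Lemma~\ref{FCLL:1}, which identified $A^*0$ with $(\dom A^*)_\bot$ using maximal monotonicity. The present lemma is essentially a linear-algebraic corollary, and the only point that requires a moment of care is verifying that the adjoint of a linear relation is again a linear relation so that Fact~\ref{Rea:1}\ref{Th:28} (whose statement is given for $A\colon X\To X^*$ but whose proof is purely algebraic) is applicable to $A^*\colon X^{**}\To X^*$.
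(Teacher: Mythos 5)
Your proposal is correct and follows essentially the same route as the paper: apply Fact~\ref{Rea:1}\ref{Th:28} to the linear relation $A^*$ to write $A^*x^{**}=x^*+A^*0$, then invoke Lemma~\ref{FCLL:1} to identify $A^*0$ with $(\dom A^*)_\bot$ and kill the ambiguity in the pairing. Your explicit check that $A^*$ is itself a linear relation is a point the paper leaves implicit, but otherwise the arguments coincide.
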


\begin{proof}Take $x^{**}\in\dom A^*$ and $x^*\in A^*x^{**}$.
By Fact~\ref{Rea:1}\ref{Th:28} and Lemma~\ref{FCLL:1},
\begin{align*}\langle x^{**},A^{*}x^{**}\rangle=\langle x^{**},x^*+A^*0\rangle=\langle x^{**},x^*\rangle.
\end{align*}
Thus $\langle x^{**}, A^*x^{**}\rangle$
is single-valued.
\end{proof}

\section{Main result}\label{s:main}
\begin{theorem}\label{TypeD:1}
Let $A:X\rightrightarrows X^*$ be a maximally monotone linear relation.
Then the following are equivalent.
\begin{enumerate}
\item\label{MaT:1} $A$ is of type (D).

\item \label{MaT:2} $A$ is of type (NI).
\item \label{MaT:3} $A^*$ is monotone.

\item \label{MaT:4} $A$ is of type (FP).
\end{enumerate}
\end{theorem}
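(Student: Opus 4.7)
The implication (i)$\Leftrightarrow$(ii) is furnished by Fact~\ref{PF:Su1} and (i)$\Rightarrow$(iv) by Fact~\ref{FTSim:1}; hence it suffices to establish (ii)$\Rightarrow$(iii), (iii)$\Rightarrow$(ii), and (iv)$\Rightarrow$(iii) to close the cycle. For (ii)$\Rightarrow$(iii), I take $(x^{**},x^*)\in\gra A^*$, note from the adjoint definition that $\langle a,x^*\rangle=\langle x^{**},a^*\rangle$ for every $(a,a^*)\in\gra A$, and apply the type (NI) inequality at $(-x^{**},x^*)$: the two cross terms cancel, and since $(0,0)\in\gra A$ and $A$ is monotone one has $\inf_{\gra A}\langle a,a^*\rangle=0$. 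This forces $\langle x^{**},x^*\rangle\geq 0$, which together with Lemma~\ref{FCLL:2} proves monotonicity of $A^*$.

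The key step is (iii)$\Rightarrow$(ii). The plan is to verify condition~\ref{MSF:3} of Fact~\ref{PF:Su1} using the representative $G(x,x^*):=\langle x,x^*\rangle+\iota_{\gra A}(x,x^*)$, which is proper lsc convex by Fact~\ref{f:referee} and the closedness of $\gra A$. For a fixed $(x_0,x^*_0)\in X\times X^*$ set
\[
h(y,y^*):=\langle y-x_0,y^*-x^*_0\rangle+\tfrac{1}{2}\|y-x_0\|^2+\tfrac{1}{2}\|y^*-x^*_0\|^2+\iota_{\gra A}(y,y^*),
\]
and assume for contradiction that $\inf h=\delta>0$. A near-minimizer $(y_0,y^*_0)$ with $h(y_0,y^*_0)\leq\delta+\varepsilon^2$ satisfies $0\in\partial_{\varepsilon^2} h(y_0,y^*_0)$, so Borwein's theorem (Fact~\ref{f:FJB4}, with $\beta=0$) produces $(y_\varepsilon,y^*_\varepsilon)\in\gra A$ with $\|(y_\varepsilon,y^*_\varepsilon)-(y_0,y^*_0)\|_1\leq\varepsilon$ and $(u^*_\varepsilon,u^{**}_\varepsilon)\in\partial h(y_\varepsilon,y^*_\varepsilon)$ of dual norm $\leq\varepsilon$. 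The subdifferential sum rule then supplies $j_\varepsilon\in\partial[\tfrac{1}{2}\|\cdot-x_0\|^2](y_\varepsilon)$, $k_\varepsilon\in\partial[\tfrac{1}{2}\|\cdot-x^*_0\|^2](y^*_\varepsilon)$, and $(w^*_\varepsilon,w^{**}_\varepsilon)\in(\gra A)^\perp$ for which $(u^*_\varepsilon,u^{**}_\varepsilon)=(y^*_\varepsilon-x^*_0+j_\varepsilon+w^*_\varepsilon,\,y_\varepsilon-x_0+k_\varepsilon+w^{**}_\varepsilon)$. The perp membership rewrites as $(-w^{**}_\varepsilon,w^*_\varepsilon)\in\gra A^*$, so hypothesis (iii) gives $\langle w^{**}_\varepsilon,w^*_\varepsilon\rangle\leq 0$. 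Expanding this pairing using the duality-map identities $\langle y_\varepsilon-x_0,j_\varepsilon\rangle=\|y_\varepsilon-x_0\|^2$ and $\langle k_\varepsilon,y^*_\varepsilon-x^*_0\rangle=\|y^*_\varepsilon-x^*_0\|^2$, and subtracting $h(y_\varepsilon,y^*_\varepsilon)\geq\delta$, should yield
\[
\tfrac{1}{2}\|y_\varepsilon-x_0\|^2+\tfrac{1}{2}\|y^*_\varepsilon-x^*_0\|^2+\langle k_\varepsilon,j_\varepsilon\rangle\leq-\delta+O(\varepsilon).
\]
Since $\langle k_\varepsilon,j_\varepsilon\rangle\geq-\|k_\varepsilon\|\,\|j_\varepsilon\|=-\|y^*_\varepsilon-x^*_0\|\,\|y_\varepsilon-x_0\|$ and AM--GM, the left-hand side is nonnegative, contradicting $\delta>0$ as $\varepsilon\to 0$.

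For (iv)$\Rightarrow$(iii), a pair $(x^{**},x^*)\in\gra A^*$ with $\langle x^{**},x^*\rangle<0$ should be excluded by an FP-style construction: using the linearity of $\gra A$ and the adjoint identity to rescale, one selects an open convex $U\subseteq X^*$ meeting $\ran A$ together with a pair $(x,x^*)\in X\times U$ monotonically related to $\gra A\cap(X\times U)$ but not in $\gra A$, contradicting (iv); the subcase $x^{**}\in X$ is already handled by bare maximal monotonicity. The main obstacle of the entire argument is (iii)$\Rightarrow$(ii): keeping Borwein's approximation errors, the duality-map identities, and the $(\gra A)^\perp$-decomposition in lockstep in a non-reflexive Banach space is precisely where the combined strength of Facts~\ref{f:F4} and~\ref{f:FJB4}, together with Lemmas~\ref{FCLL:1} and~\ref{FCLL:2}, is decisive.
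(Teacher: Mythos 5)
Your reduction of the theorem to the three implications \ref{MaT:2}$\Rightarrow$\ref{MaT:3}, \ref{MaT:3}$\Rightarrow$\ref{MaT:2}, \ref{MaT:4}$\Rightarrow$\ref{MaT:3} matches the paper, and your arguments for the first two are sound. Your \ref{MaT:2}$\Rightarrow$\ref{MaT:3} is the paper's argument. Your \ref{MaT:3}$\Rightarrow$\ref{MaT:2} is a mild but genuine variant: the paper first passes to a dual attaining point $(y^*,y^{**})$ via Fact~\ref{f:F4} and then applies Fact~\ref{f:FJB4} to $F_{(v_0,v_0^*)}$ alone, running the estimates with both a primal and a dual sequence, whereas you apply Fact~\ref{f:FJB4} directly to the fully perturbed function $h$ with approximate subgradient $0$, which avoids the duality step entirely. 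Your final estimate does close (I checked: expanding $\langle w^{**}_\varepsilon,w^*_\varepsilon\rangle\leq 0$ and subtracting $h(y_\varepsilon,y^*_\varepsilon)\geq\delta$ gives exactly your display, whose left side is nonnegative by Cauchy--Schwarz and AM--GM), but you should make two supporting points explicit: coercivity of $h$ on $\gra A$ (which uses $\langle a,a^*\rangle\geq 0$ on $\gra A$) so that the near-minimizers are bounded and your $O(\varepsilon)$ terms are honest, and the identification $\partial\big(\langle\cdot,\cdot\rangle+\iota_{\gra A}\big)(a,a^*)=(a^*,a)+(\gra A)^{\bot}$, which is what the paper cites Z{\u{a}}linescu for.

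The genuine gap is \ref{MaT:4}$\Rightarrow$\ref{MaT:3}, which is not a routine step but one of the two implications that answer the posed open problems, and your paragraph for it is a plan rather than a proof. You propose to contradict (FP) by exhibiting a pair in $X\times U$ that is monotonically related to $\gra A\cap(X\times U)$ but outside $\gra A$; you give no construction of such a pair from the hypothesis $\langle x^{**},x^*\rangle<0$, and the obstruction is precisely that $x^{**}$ lives in $X^{**}$, so there is no natural candidate for the first coordinate in $X$. The paper's argument runs in the opposite logical direction: it produces $a_0\in\dom A$ with $\langle x^*_0,a_0\rangle<0$ (using Fact~\ref{Rea:1}\ref{Sia:2b} and Lemma~\ref{FCLL:2}), picks $a^*_0\in Aa_0$, sets $C_n=[a^*_0,x^*_0]+\tfrac{1}{n}B_{X^*}$, shows $(0,x^*_0)\notin\gra A$, and then uses (FP) with $U=\inte C_n$ to conclude that $(0,x^*_0)$ is \emph{not} monotonically related to $\gra A\cap(X\times C_n)$; the resulting strict inequality is dualized via Fact~\ref{f:F4}, the weak$^*$ compactness of $C_n$, and the identity $F^*(x^*_0,-x^{**}_0)=0$, yielding $\max\{\langle x^*_0,x^{**}_0\rangle,\langle x^{**}_0,a^*_0\rangle\}\geq-\tfrac{1}{n}\|x^{**}_0\|$; letting $n\to\infty$ and using $\langle x^{**}_0,a^*_0\rangle=\langle x^*_0,a_0\rangle<0$ forces $\langle x^{**}_0,x^*_0\rangle\geq 0$. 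That conjugate computation is where the information carried by $x^{**}_0$ re-enters, and nothing in your sketch replaces it. Finally, your remark that the subcase $x^{**}\in X$ ``is already handled by bare maximal monotonicity'' is unjustified: $(x,x^*)\in\gra A^*$ with $x\in X$ does not make $(x,x^*)$ monotonically related to $\gra A$, so maximality of $A$ yields no conclusion about the sign of $\langle x,x^*\rangle$ by itself.
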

\begin{proof}
``\ref{MaT:1}$\Leftrightarrow$\ref{MaT:2}'': Fact~\ref{PF:Su1}.

``\ref{MaT:2}$\Rightarrow$\ref{MaT:3}'':
Suppose to the contrary that there exists
 $(a^{**}_0, a^*_0)\in\gra A^*$ such that
$\langle a^{**}_0, a^*_0\rangle<0$.
Then we have
\begin{align*}
&\sup_{(a,a^*)\in\gra A}\big(\langle a,-a^*_0\rangle+\langle a^{**}_0,
 a^*\rangle-\langle a,a^*\rangle\big)=\sup_{(a,a^*)\in\gra A}\{-\langle a,a^*\rangle\}=0<\langle -a^{**}_0, a^*_0\rangle,
\end{align*}
which contradicts that $A$ is type of (NI).
Hence $A^*$ is monotone.

``\ref{MaT:3}$\Rightarrow$\ref{MaT:2}'':
Define
\begin{align*}
F:X\times X^*\to \RX\colon
(x,x^*)\mapsto \iota_{\gra A}(x,x^*)+\langle x,x^*\rangle.
\end{align*}
Since $A$ is maximally monotone,
Fact~\ref{f:referee} implies that
$F$ is proper lower semicontinuous and convex, and a representative of $A$.
Let $(v_0,v^*_0)\in X\times X^*$.
Recalling \eqref{e:timmy}, note that
\begin{align}
F_{(v_0,v^*_0)} \colon (x,x^*)
\mapsto \iota_{\gra A}(v_0+x,v^*_0+x^*)+\langle x,x^*\rangle \label{BrB:PSb1}
\end{align}
is proper lower semicontinuous and convex.
By  Fact~\ref{f:F4}, there exists
$(y^{**},y^*)\in X^{**}\times X^* $ such that
\begin{align}
K&:=\inf_{(x,x^*)\in X\times X^*}\left[F_{(v_0,v^*_0)}(x,x^*)+\tfrac{1}{2}\|x\|^2
+\tfrac{1}{2}\|x^*\|^2\right]\nonumber\\
&=-\big(F_{(v_0,v^*_0)}+\tfrac{1}{2}\|\cdot\|^2
+\tfrac{1}{2}\|\cdot\|^2\big)^*(0,0)\nonumber\\
&=-F^*_{(v_0,v^*_0)}(y^*,y^{**})-
\tfrac{1}{2}\|y^{**}\|^2-\tfrac{1}{2}\|y^*\|^2.
\label{BrB:PS1}
\end{align}
Since $(x,x^*)\mapsto F_{(v_0,v^*_0)}(x,x^*) + \tfrac{1}{2}\|x\|^2 +
\tfrac{1}{2}\|x^*\|^2$ is coercive, there exist $M> 0$ and a sequence
$(a_n,a^*_n)_{n\in\NN}$ in $X\times X^*$
such that
   \begin{align}\|a_n\|+\|a^*_n\|\leq M\label{coer:1}\end{align}
   and
\begin{align}
&F_{(v_0,v^*_0)}(a_n,a_n^*)+\tfrac{1}{2}\|a_n\|^2
+\tfrac{1}{2}\|a_n^*\|^2\nonumber\\
&<K+\tfrac{1}{n^2}=-F^*_{(v_0,v^*_0)}(y^*,y^{**})-\tfrac{1}{2}\|y^{**}\|^2-\tfrac{1}{2}\|y^*\|^2+
\tfrac{1}{n^2}
\quad\text{(by \eqref {BrB:PS1}
)}\nonumber
\\
&\Rightarrow F_{(v_0,v^*_0)}(a_n,a_n^*)+\tfrac{1}{2}\|a_n\|^2
+\tfrac{1}{2}\|a_n^*\|^2
+F^*_{(v_0,v^*_0)}(y^*,y^{**})+\tfrac{1}{2}\|y^{**}\|^2+\tfrac{1}{2}\|y^*\|^2
<\tfrac{1}{n^2}\label{BrB:PSa:01}\\
&\Rightarrow F_{(v_0,v^*_0)}(a_n,a_n^*)+F^*_{(v_0,v^*_0)}(y^*,y^{**})+
\langle a_n,-y^*\rangle+\langle a^*_n,-y^{**}\rangle<\tfrac{1}{n^2}\label{BrB:PS03}\\
&\Rightarrow (y^*,y^{**})
\in\partial_{\tfrac{1}{n^2}} F_{(v_0,v^*_0)}(a_n,a_n^*)\quad\text{(by
 \cite[Theorem~2.4.2(ii)]{Zalinescu})}.
\label{BrB:PS2}
\end{align}  Set $\beta=\frac{1}{\max\{\|y^*\|, \|y^{**}\|\}+1}$. Then
by  Fact~\ref{f:FJB4}, there exist sequences  $(\widetilde{a_n},
 \widetilde{a^{*}_n})_{n\in\NN}$ in $X\times X^*$   and
$(y^*_n, y^{**}_n)_{n\in\NN}$ in $X^{*}\times X^{**}$  such  that
 \begin{align}
&\|a_n-\widetilde{a_n}\|+ \|a_n^*-\widetilde{a^*_n}\|+\beta\left|\langle \widetilde{a_n}-a_n, y^*\rangle+
\langle \widetilde{a^*_n}-a^*_n, y^{**}\rangle\right|\leq\tfrac{1}{n}\label{BrB:PSa1}\\
&\max\{\|y^*_n-y^{*}\|,\|y^{**}_n-y^{**}\|\}\leq \tfrac{2}{n}\label {BrB:PSaa2}\\
&\left|\langle \widetilde{a_n}-a_n, y_n^*\rangle+
\langle \widetilde{a^*_n}-a^*_n, y_n^{**}\rangle\right|\leq\tfrac{1}{n^2}+\tfrac{1}{n\beta}\label{BrB:PSa2}\\
&(y^*_n, y^{**}_n)\in\partial F_{(v_0,v^*_0)}(\widetilde{a_n}, \widetilde{a^{*}_n})\label{BrB:PSaa3},\quad \forall n\in\NN.
\end{align}
Then we have
\begin{align}
&\langle \widetilde{a_n}, y^*_n\rangle+\langle \widetilde{a^*_n}, y^{**}_n\rangle
-\langle a_n, y^*\rangle-\langle a^*_n, y^{**}\rangle\nonumber\\
&=\langle \widetilde{a_n}-a_n, y^*_n\rangle+\langle a_n, y^*_n-y^*\rangle
+\langle \widetilde{a^*_n}-a^*_n, y^{**}_n\rangle+\langle a^*_n,y^{**}_n- y^{**}\rangle\nonumber\\
&\leq \left| \langle \widetilde{a_n}-a_n, y^*_n\rangle+\langle \widetilde{a^*_n}
-a^*_n, y^{**}_n\rangle\right|+\left|\langle a_n, y^*_n-y^*\rangle\right|+\left|\langle a^*_n,y^{**}_n
- y^{**}\rangle\right|\nonumber\\
&\leq\tfrac{1}{n^2}+\tfrac{1}{n\beta}+\|a_n\|\cdot\|y^*_n-y^*\|+\|a^*_n\|\cdot\|y^{**}_n
- y^{**}\|\quad \text{(by \eqref{BrB:PSa2})}\nonumber\\
&\leq\tfrac{1}{n^2}+\tfrac{1}{n\beta}+(\|a_n\|+\|a^*_n\|)\cdot\max\{\|y^*_n-y^*\|,\|y^{**}_n
- y^{**}\|\}\nonumber\\
&\leq\tfrac{1}{n^2}+\tfrac{1}{n\beta}+
\tfrac{2}{n}M\quad \text{(by
 \eqref{coer:1} and \eqref{BrB:PSaa2})},\quad \forall n\in\NN\label{BrB:PS9}.
\end{align}
By \eqref{BrB:PSa1}, we have
\begin{align}\big|\|a_n\|-\|\widetilde{a_n}\|\big|
+ \big|\|a_n^*\|-\|\widetilde{a^*_n}\|\big|\leq\tfrac{1}{n}.\label{BrB:De1}
\end{align}
Thus by \eqref{coer:1}, we have
\begin{align}&\left|\|a_n\|^2-\|\widetilde{a_n}\|^2\right|+
 \left|\|a_n^*\|^2-\|\widetilde{a^*_n}\|^2\right|\nonumber\\
&=\big|\|a_n\|-\|\widetilde{a_n}\|\big|
\big(\|a_n\|+\|\widetilde{a_n}\|\big)+
 \left|\|a_n^*\|-\|\widetilde{a^*_n}\|\right|\left(\|a_n^*\|+\|\widetilde{a^*_n}\|\right)\nonumber\\
 &\leq\tfrac{1}{n}\left(2\|a_n\|+\tfrac{1}{n}\right)
 +\tfrac{1}{n}\left(2\|a_n^*\|+\tfrac{1}{n}\right)\quad \text{(by \eqref{BrB:De1})}\nonumber\\
  &\leq\tfrac{1}{n}(2M+\tfrac{2}{n})=\tfrac{2}{n}M+\tfrac{2}{n^2},\quad \forall n\in\NN.
 \label{BrB:PS0ab}
\end{align}
Similarly, by \eqref{BrB:PSaa2}, for all $n\in\NN$, we have
\begin{align}\left |\|y^*_n\|^2-\|y^*\|^2\right|
\leq\tfrac{4}{n}\|y^*\|+\tfrac{4}{n^2}\leq\tfrac{4}{n\beta}+\tfrac{4}{n^2},\quad
\left |\|y^{**}_n\|^2-\|y^{**}\|^2\right|\leq
\tfrac{4}{n}\|y^{**}\|+\tfrac{4}{n^2}\leq\tfrac{4}{n\beta}+\tfrac{4}{n^2}.
 \label{BrB:PS0ab0}
\end{align}
Thus
\begin{align}&F_{(v_0,v^*_0)}(\widetilde{a_n},
\widetilde{a^{*}_n})+F^*_{(v_0,v^*_0)}(y^*_n, y^{**}_n)
+\tfrac{1}{2}\|\widetilde{a_n}\|^2
+\tfrac{1}{2}\|\widetilde{a^*_n}\|^2+\tfrac{1}{2}\|y^*_n\|^2
+\tfrac{1}{2}\|y^{**}_n\|^2\nonumber\\
&= \left[F_{(v_0,v^*_0)}(\widetilde{a_n},
\widetilde{a^{*}_n})+F^*_{(v_0,v^*_0)}(y^*_n, y^{**}_n)
+\tfrac{1}{2}\|\widetilde{a_n}\|^2
+\tfrac{1}{2}\|\widetilde{a^*_n}\|^2+\tfrac{1}{2}\|y^*_n\|^2
+\tfrac{1}{2}\|y^{**}_n\|^2\right]\nonumber\\
&\quad-\left[F_{(v_0,v^*_0)}(a_n,a_n^*)+\tfrac{1}{2}\|a_n\|^2
+\tfrac{1}{2}\|a_n^*\|^2
+F^*_{(v_0,v^*_0)}(y^*,y^{**})+\tfrac{1}{2}\|y^{**}\|^2
+\tfrac{1}{2}\|y^*\|^2\right]\nonumber\\
&\quad +
\left[F_{(v_0,v^*_0)}(a_n,a_n^*)+\tfrac{1}{2}\|a_n\|^2
+\tfrac{1}{2}\|a_n^*\|^2
+F^*_{(v_0,v^*_0)}(y^*,y^{**})+\tfrac{1}{2}\|y^{**}\|^2
+\tfrac{1}{2}\|y^*\|^2\right]\nonumber\\
&< \left[F_{(v_0,v^*_0)}(\widetilde{a_n},
\widetilde{a^{*}_n})+
F^*_{(v_0,v^*_0)}(y^*_n, y^{**}_n)-F_{(v_0,v^*_0)}(a_n,a_n^*)
-F^*_{(v_0,v^*_0)}(y^*,y^{**})\right]\nonumber\\
&\quad+\tfrac{1}{2}\left[\|\widetilde{a_n}\|^2
+\|\widetilde{a^*_n}\|^2-\|a_n\|^2
-\|a_n^*\|^2\right]\nonumber\\
&\quad+\tfrac{1}{2}\left[\|y^*_n\|^2
+\|y^{**}_n\|^2-\|y^{**}\|^2-\|y^*\|^2\right]
+\tfrac{1}{n^2}\quad \text{(by \eqref{BrB:PSa:01})}\nonumber\\
&\leq \left[\langle \widetilde{a_n}, y^*_n\rangle+\langle \widetilde{a^*_n}, y^{**}_n\rangle
-\langle a_n, y^*\rangle-\langle a^*_n, y^{**}\rangle\right]\quad \text{(by \eqref{BrB:PSaa3})}\nonumber\\
&\quad+\tfrac{1}{2}\left(\left|\|\widetilde{a_n}\|^2-\|a_n\|^2\right|
+\left|\|\widetilde{a^*_n}\|^2
-\|a_n^*\|^2\right|\right)\nonumber\\
&\quad+\tfrac{1}{2}\left(\left|\|y^*_n\|^2-\|y^*\|^2\right|
+\left|\|y^{**}_n\|^2-\|y^{**}\|^2\right|\right)
+\tfrac{1}{n^2}\nonumber\\
&\leq \tfrac{1}{n^2}+\tfrac{1}{n\beta}+\tfrac{2}{n}M+\tfrac{1}{n}M+\tfrac{1}{n^2}
+\tfrac{4}{n\beta}
+\tfrac{4}{n^2}+\tfrac{1}{n^2}\quad \text{(by \eqref{BrB:PS9},  \eqref{BrB:PS0ab} and \eqref{BrB:PS0ab0})}\nonumber\\
&=\tfrac{7}{n^2}+\tfrac{5}{n\beta}+\tfrac{3}{n}M,\quad \forall n\in\NN.
\label{BrB:PS4}
\end{align}
By \eqref{BrB:PSaa3}, \eqref{BrB:PSb1},
and \cite[Theorem 3.2.4(vi)\&(ii)]{Zalinescu},
 there exists a sequence
 $(z^*_n,z^{**}_n)_{n\in\NN}$ in
$(\gra A)^{\bot}$ and such that
\begin{align}
(y^*_n, y^{**}_n)=(\widetilde{a^{*}_n}, \widetilde{a_n})+(z^*_n,z^{**}_n),\quad \forall n\in\NN.\label{BrB:PS10}
\end{align}
Since $A^*$ is monotone and $(z^{**}_n,z^{*}_n)\in\gra(- A^*)$,
it follows from \eqref{BrB:PS10} that
\begin{align}&\langle y^*_n, y^{**}_n\rangle-\langle y_n^*,
 \widetilde{a_n}\rangle-\langle y_n^{**}, \widetilde{a^{*}_n}\rangle
 +\langle \widetilde{a^{*}_n},\widetilde{a_n}\rangle =
\langle y^*_n- \widetilde{a^{*}_n}, y^{**}_n-\widetilde{a_n}\rangle
=\langle z^*_n,z^{**}_n\rangle\leq0\nonumber\\
&\Rightarrow \langle y^*_n, y^{**}_n\rangle\leq \langle y_n^*,
 \widetilde{a_n}\rangle+\langle y_n^{**}, \widetilde{a^{*}_n}\rangle
 -\langle \widetilde{a^{*}_n},\widetilde{a_n}\rangle,\quad \forall n\in\NN.\nonumber\end{align}
Then by
\eqref{BrB:PSb1} and \eqref{BrB:PSaa3}, we have
$\langle \widetilde{a^{*}_n},\widetilde{a_n}\rangle = F_{(v_0,v^*_0)}(\widetilde{a_n},
\widetilde{a^{*}_n})$ and
 \begin{align}
&\langle y^*_n, y^{**}_n\rangle\leq \langle y_n^*,
 \widetilde{a_n}\rangle+\langle y_n^{**}, \widetilde{a^{*}_n}\rangle-F_{(v_0,v^*_0)}(\widetilde{a_n},
\widetilde{a^{*}_n})=F^*_{(v_0,v^*_0)}(y^*_n, y^{**}_n),\quad \forall n\in\NN.\label{BrB:PS11}
\end{align}
By \eqref{BrB:PS4} and \eqref{BrB:PS11}, we have
\begin{align}
&F_{(v_0,v^*_0)}(\widetilde{a_n},
\widetilde{a^{*}_n})+\langle y^*_n, y^{**}_n\rangle
+\tfrac{1}{2}\|\widetilde{a_n}\|^2
+\tfrac{1}{2}\|\widetilde{a^*_n}\|^2+\tfrac{1}{2}\|y^*_n\|^2
+\tfrac{1}{2}\|y^{**}_n\|^2< \tfrac{7}{n^2}+\tfrac{5}{n\beta}+\tfrac{3}{n}M\nonumber\\
&\Rightarrow F_{(v_0,v^*_0)}(\widetilde{a_n},
\widetilde{a^{*}_n})
+\tfrac{1}{2}\|\widetilde{a_n}\|^2
+\tfrac{1}{2}\|\widetilde{a^*_n}\|^2<\tfrac{7}{n^2}+\tfrac{5}{n\beta}+\tfrac{3}{n}M,\quad \forall n\in\NN.\label{BrB:PS12}
\end{align}
Thus by \eqref{BrB:PS12},
\begin{align}
\inf_{(x,x^*)\in X\times X^*}\left[F_{(v_0,v^*_0)}(x,x^*)+\tfrac{1}{2}\|x\|^2
+\tfrac{1}{2}\|x^*\|^2\right]\leq 0.\label{BrB:PS13}
\end{align}
By \eqref{BrB:PSb1},
\begin{align}
\inf_{(x,x^*)\in X\times X^*}\left[F_{(v_0,v^*_0)}(x,x^*)+\tfrac{1}{2}\|x\|^2
+\tfrac{1}{2}\|x^*\|^2\right]\geq0.\label{BrB:PS14}
\end{align}
Combining \eqref{BrB:PS13} with \eqref{BrB:PS14}, we  obtain
\begin{align}
\inf_{(x,x^*)\in X\times X^*}\left[F_{(v_0,v^*_0)}(x,x^*)+\tfrac{1}{2}\|x\|^2
+\tfrac{1}{2}\|x^*\|^2\right]=0.
\end{align}
Thus by Fact~\ref{PF:Su1}, $A$ is of type (NI).  This concludes the
proof that \ref{MaT:1}, \ref{MaT:2}, and \ref{MaT:3} coincide.

Now ``\ref{MaT:1}$\Rightarrow$\ref{MaT:4}'' follows from
Fact~\ref{FTSim:1}. It remains to show only:

``\ref{MaT:4}$\Rightarrow$\ref{MaT:3}'':
Let $(x^{**}_0,x^*_0)\in\gra A^*$.
We must show that
\begin{align}
\langle x^{**}_0,x^*_0\rangle\geq 0.\label{FPCM:0a1}
\end{align}
We can and do  assume that
\begin{align}
\langle x^{**}_0,x^*_0\rangle\neq 0. \label{FPCM:01}
\end{align}
By Fact~\ref{Rea:1}\ref{Sia:2b},
\begin{align}
\langle x^{**}_0, Aa\rangle=\langle x^{*}_0, a\rangle,\quad \forall a\in\dom A.\label{FPCM:1}
\end{align}
We claim  that there exists $a_0\in\dom A$ such that
\begin{align}
\langle x^{*}_0, a_0\rangle<0.\label{FPCM:2}
\end{align}
Recalling that $\dom A$ is a subspace,
we suppose to the contrary that
\begin{align}
\langle x^{*}_0, a\rangle=0,\quad \forall a\in\dom A.
\end{align}
Thus
\begin{align}
(0,x^{*}_0)\in\gra A^*.
\end{align}
Since $(x^{**}_0,x^*_0)\in\gra A^*$,  $(x^{**}_0,0)\in\gra A^*$.
Thus, by Lemma~\ref{FCLL:2},
\begin{align}
\langle x^{**}_0,x^*_0\rangle=\langle x^{**}_0,0\rangle= 0,
\end{align}
which contradicts \eqref{FPCM:01}.
Hence \eqref{FPCM:2} holds.
Take $a^*_0\in X^*$ such that $(a_0,a^*_0)\in\gra A$. Set
\begin{align}
C_n=\left[a^*_0, x^*_0\right]+\tfrac{1}{n} B_{X^*}.\label{FPCM:03}
\end{align}
Then $C_n$ is weak$^*$ compact, convex,
and $x^*_0\in \inte C_n$.

Now we show that
\begin{align}
(0,x^*_0)\notin \gra A.\label{FPCM:3}
\end{align}
Suppose to the contrary that $(0, x^*_0)\in\gra A$. By Lemma~\ref{FCLL:1},
 $(0,x^*_0)\in\gra A^*$.
 Since $(x^{**}_0,x^*_0)\in\gra A^*$, $(x^{**}_0,0)\in\gra A^*$. Thus
  by Lemma~\ref{FCLL:2} again,
we have
\begin{align}
\langle x^{**}_0,x^*_0\rangle=\langle x^{**}_0,0\rangle= 0,
\end{align}
which contradicts \eqref{FPCM:01}.
Thus \eqref{FPCM:3} holds.

By \eqref{FPCM:03}, $x^*_0\in\inte C_n$.  Then by \eqref{FPCM:3}, $a^*_0\in\ran A\cap\inte C_n$ and that $A$ is of type (FP), we have
\begin{align}
0>&\inf_{(a,a^*)\in X\times X^*}\big(-\langle x^*_0, a\rangle+\langle a,a^*\rangle+
 \iota_{\gra A}(a,a^*)+ \iota_{X\times C_n}(a,a^*)\big)\notag\\
&=-\left[\langle \cdot,\cdot\rangle+
 \iota_{\gra A}+ \iota_{X\times C_n}\right]^*(x^*_0,0),
 \quad \forall n\in\NN. \label{FEPCM:b01}
\end{align}
By Fact~\ref{f:referee},
 \begin{equation}
F\colon X\times X^* \to \RX\colon
(x,x^*)\mapsto \scal{x}{x^*} + \iota_{\gra A}(x,x^*)\quad\text{ is proper and convex
}. \label{FEPCM:b1}
\end{equation}
Since
\begin{equation} (a_0,a^*_0)\in\gra A\quad\text{and}\quad
a^*_0 \in \ran A \cap \inte C_n, \quad \forall n\in\NN,
\end{equation}
$(a_0,a^*_0)\in\dom F\cap\inte \dom  \iota_{X\times C_n}$.
Then \begin{align}
 \iota_{X\times C_n}\, \text{is continuous at $(a_0,a^*_0)$},\quad \forall n\in\NN. \label{FEPCM:b2}
\end{align}
Using \eqref{FEPCM:b01}, \eqref{FEPCM:b2},  \eqref{FEPCM:b1},
Fact~\ref{f:F4}, and the fact that
$(x_0^{**},x_0^*)\in\gra A^* \Leftrightarrow F^*(x_0^*,-x_0^{**})=0$,
we have
 \begin{align}
 &0>-\min_{(y^{**},y^*)\in X^{**}\times X^*}\left[F^*(x^*_0+y^*,y^{**})+  \iota^*_{X\times C_n}(-y^*,-y^{**})\right]\nonumber\\
 &\geq-\left[F^*(x^*_0,-x^{**}_0)+  \iota^*_{X\times C_n}(0,x_0^{**})\right]\nonumber\\
 &=- \iota^*_{X\times C_n}(0,x_0^{**})\nonumber\\
 &=-\tfrac{1}{n}\|x_0^{**}\|-\max\{\langle x^*_0,x^{**}_0\rangle,\langle x^{**}_0, a^*_0\rangle\}.
  \label {FPCM:5}
\end{align}
Take $n\to \infty$ in \eqref{FPCM:5} to get
\begin{align}
\max\{\langle x^*_0,x^{**}_0\rangle,\langle x^{**}_0, a^*_0\rangle\}\geq 0.  \label {FPCM:7}\end{align}
Since \begin{align}
\langle x^{**}_0, a^*_0\rangle=\langle x^*_0,  a_0\rangle<0,
\quad\text{(by \eqref{FPCM:1} and \eqref{FPCM:2})}\end{align}
it follows from \eqref{FPCM:7} that
\begin{align}
\langle x^{**}_0, x^*_0\rangle\geq 0.\end{align} Thus \eqref
{FPCM:0a1} holds and hence $A^*$ is monotone. This establishes
\ref{MaT:3} as required.
\end{proof}

\begin{remark}When $A$ is linear and continuous,
Theorem~\ref{TypeD:1} is due to
 Bauschke and Borwein \cite[Theorem 4.1]{BB}.
 Phelps  and Simons in \cite[Theorem~6.7]{PheSim}  considered the case when
    $A$ is linear but possibly discontinuous; they arrived at
    some of the implications of Theorem 3.1 in that case.

\begin{enumerate}
\item
The proof of \ref{MaT:2}$\Rightarrow$\ref{MaT:3}
in Theorem~\ref{TypeD:1}
 follows closely that of \cite[Theorem~2]{Brezis-Browder}.
\item
Theorem~\ref{TypeD:1}\ref{MaT:3}$\Rightarrow$\ref{MaT:1}
gives an affirmative answer
 to a problem posed by Phelps and Simons
 in \cite[Section~9, item~2]{PheSim}
on the converse of \cite[Theorem~6.7(c)$\Rightarrow$(f)]{PheSim}.
\item
Theorem~\ref{TypeD:1}\ref{MaT:4}$\Rightarrow$\ref{MaT:2}
gives an affirmative answer
 to a problem posed by Simons in \cite[Problem~47.6]{Si2}.
 \item
The    proof of \ref{MaT:3}$\Rightarrow$\ref{MaT:2}
in Theorem~\ref{TypeD:1}
was partially inspired by that of \cite[Theorem~32.L]{Zeidler}
and that of \cite[Theorem~2.1]{MSV}.
\item
The   proof of \ref{MaT:4}$\Rightarrow$\ref{MaT:3}  in Theorem~\ref{TypeD:1}
 closely follows that of \cite[Theorem~4.1(iv)$\Rightarrow$(v)]{BB}.
 \end{enumerate}
\end{remark}

We conclude with an application of Theorem~\ref{TypeD:1}
to an operator studied previously by Phelps and Simons \cite{PheSim}.

\begin{example}
Suppose that $X=L^1[0,1]$ so that
$X^* =  L^{\infty}[0,1]$,
let \begin{align*}D=\menge{x\in X}{\text{$x$ is absolutely
continuous}, x(0)=0, x'\in  X^*},\end{align*}
and set
\begin{equation*}
A\colon X\To X^*\colon
x\mapsto \begin{cases}
\{x'\}, &\text{if $x\in D$;}\\
\varnothing, &\text{otherwise.}
\end{cases}
\end{equation*}
By  \cite[Example~4.3]{PheSim},
$A$ is an at most single-valued maximal monotone linear relation
with proper dense domain, and $A$ is neither symmetric nor skew.
Moreover,
\begin{align*}\dom A^*=\{z\in X^{**}\mid\text{$z$ is absolutely continuous}, z(1)=0,
{z}'\in X^*\} \subseteq X\end{align*}
$A^* z=-z', \forall z\in\dom A^*$, and $A^*$ is monotone.
Therefore, Theorem~\ref{TypeD:1} implies
that $A$ is  of type of (D), of type (NI), and of type (FP).
\end{example}

\section*{Acknowledgments}
Heinz Bauschke was partially supported by the Natural Sciences and
Engineering Research Council of Canada and
by the Canada Research Chair Program.
Jonathan  Borwein was partially supported by the Australian Research  Council.
Xianfu Wang was partially supported by the Natural
Sciences and Engineering Research Council of Canada.

\small

\end{document}